\newtheorem{theorem}{Theorem}
\newtheorem{proposition}[theorem]{Proposition}
\newtheorem{corollary}[theorem]{Corollary}
\theoremstyle{definition}
\newtheorem{definition}[theorem]{Definition}
\newcommand{\bbE}{\mathbb E}
\newcommand{\bbZ}{\mathbb Z}
\newcommand{\bbR}{\mathbb R}
\newcommand{\bbH}{\mathbb H}
\newcommand{\sgn}{\mathrm{sgn}}
\numberwithin{equation}{section}
\begin{document}

\title[Whitney-type Formula for Non-null-homotopic Curves]{Whitney-type Formula for Non-null-homotopic Curves on Aspherical Surfaces}

\author{Masayuki YAMASAKI}

\address{Department of Applied Science, Okayama University of Science, 
Okayama, Okayama 700-0005, Japan}
\email{yamasaki@surgery.matrix.jp}
\urladdr{http://surgery.matrix.jp/}

\begin{abstract}
In an earlier paper \cite{MY}, I defined a new winding number of regular closed
curves on complete euclidean/hyperbolic surfaces and showed that this winding number,
together with the free homotopy class, determines the regular homotopy class.
In this paper, I give a Whitney-type formula for the winding number of
non-null-homotopic generic regular closed curves on surfaces with a complete
euclidean or hyperbolic structure,
generalizing the formula for curves on a torus by Tanio and Kobayashi
\cite{T-K}.
\end{abstract}

\subjclass[2010]{57R42(55M25)}
\maketitle


\section{Introduction}
In \cite{MY}, I introduced a certain winding number of regular closed curves
on aspherical surfaces with a good geometric structure, and showed that
two homotopic regular closed curves are regularly homotopic if and only if they 
have the same winding number.
In this paper, we assume that the surface has a complete euclidean or hyperbolic structure,
and give a Whitney-type formula for this winding number
when the curve is non-null-homotopic and {\it generic} in the sense that
the self-intersection points are all transversal double points.
Such a formula was given by Tanio and Kobayashi in \cite{T-K} for curves on a torus
with a flat riemannian metric.
They defined a regular homotopy invariant $t(\gamma)$ of a generic regular closed curve
$\gamma$ on an orientable surface $M$ as follows:
Let $d$ be a double point of $\gamma$.
Then $d$ divides $\gamma$ into two loops based at $d$.  $\gamma_d$ denotes the
loop which comes back to $d$ from the left, and $\gamma'_d$ denotes the other loop 
which comes back to $d$ from the right.  Then $t(\gamma)$ is defined by
\[
t(\gamma)=\#\{d| [\gamma_d]=0\in H_1(M;\bbZ)\}
-\#\{d| [\gamma'_d]=0\in H_1(M;\bbZ)\}.
\]
They showed that, in the case of a torus, this invariant $t(\gamma)$ is equal to the
winding number of $\gamma$ if $\gamma$ is not null-homotopic.
See \cite{T} by Tanio for other properties of $t(\gamma)$.
We define a regular homotopy invariant $I(\gamma)$
by replacing the homology group $H_1(M;\bbZ)$ by $\pi_1(M,*)$ in the definition above,
when $M$ is orientable.
So $t(\gamma)$ and $I(\gamma)$ coincide if the fundamental group of the surface is abelian.
In \S2, I assume that $\gamma$ is a non-null-homotopic generic regular closed curve
on an orientable surface with complete euclidean/hyperbolic structure,
and give a description of $I(\gamma)$ which can be easily adapted to 
the non-orientable surface case, and show that it is equal to the winding number
$W(\gamma)$ defined in \cite{MY}.
In \S3, I will extend this result to the case when $M$ is non-orientable.

Please note that the term `winding number' is used in various meanings in the literature.
J.~Roe's textbook \cite{JR}, for example, defines the `winding number' of a planar closed curve
to be the number of times the curve winds around a given point,
and the term `rotation number' is used for the number of times the direction map of
a regular closed curve winds around the origin 0.
This `rotation number' is also called
the `Whitney index', the `tangent winding number', and, unfortunately, the `winding number'.
In this article, a `winding number' always mean either the `Whitney index' or its generalization.
Please also note that there are various generalizations of the Whitney index; 
for example, see the articles \cite{BLR} by Reinhart and \cite{DRJC} by Chillingworth.
My generalization is different from theirs, and is closer to 
the generalization by Kobayashi \cite{OK}.

To express a closed curve on a surafce $M$, I used a parametric representation
$\gamma:[a,b]\to M$ in \cite{MY}, and will be mainly using a map $\gamma:S^1\to M$ from
the unit circle in this paper, for technical reasons.
Please forgive my abuse of notation.
We assume that the base point of $\gamma:S^1\to M$ is $\gamma((1,0))$.

\bigbreak
\section{Preparations and the Orientable Surface Case}
We first look at the case when the curve is null-homotopic.
If a curve $\gamma$ is null-homotopic, then it lifts to a regular closed curve
$\widetilde\gamma$ on the universal cover $\widetilde M$ of the surface $M$.
We denote the universal covering map of $M$ by $p_M$.
A regular homotopy of $\gamma$ induces a regular homotopy of $\widetilde\gamma$,
and vice versa.  So the regular homotopy classification of null-homotopic
regular closed curves is the same as that of regular closed curves on the universal cover.
If $M$ is a complete euclidean or hyperbolic surface, then 
$\widetilde M$ is equal to the euclidean plane $\bbE^2$ or the hyperbolic plane $\bbH^2$
(more precisely, the whole plane or the upper half plane/the open unit disk).
A regular homotopy in $\bbE^2$ of curves in $\widetilde M$ can be deformed
into a regular homotopy in $\widetilde M$ without changing the curves;
so, the classical Whitney index \cite{HW} for the euclidean plane can be used
for the classification on $\widetilde M$.
If $M$ is orientable, then all the lifts of $\gamma$ have the same Whitney index;
on the other hand, if $M$ is non-orientable, then either all of them have the trivial
Whitney index or the lifts are divided into two types of lifts whose Whitney indices have 
the same absolute value and the opposite signs.
So, given a null-homotopic regular closed curve $\gamma$ in $M$,
we defined its winding number $W(\gamma)$ in \cite{MY} by:
\[
W(\gamma)=\begin{cases} W(\widetilde\gamma)\in\bbZ  & \text{if $M$ is orientable,}\\
|W(\widetilde\gamma)|\in \bbZ_+ & \text{if $M$ is non-orientable,}
\end{cases}
\]
where $\widetilde \gamma$ is any lift of $\gamma$
and $W(\widetilde\gamma)$ is its classical Whitney index of $\widetilde\gamma$.
The classical Whitney's formula \cite{HW} expresses $W(\widetilde\gamma)$ in terms of the
signs of the double points, with respect to a suitably chosen base point.

Next let us assume that $\gamma$ is non-null-homotopic and generic.
In this case, $\gamma$ does not lift to the universal cover; but,
the composite map $\gamma\circ p_{S^1}:\bbR=\widetilde{S^1} \to M$ does lift to 
a map $\widetilde \gamma:\bbR\to \widetilde M$, where $p_{S^1}:\bbR\to S^1$
is the universal covering map of $S^1$ given by $p_{S^1}(t)=(\cos 2\pi t, \sin 2\pi t)$.
Such a lift $\widetilde\gamma:\widetilde{S^1}\to\widetilde M$ will be called a {\it cover} of
$\gamma$.  Since $\gamma$ is generic, any cover $\widetilde\gamma$ is also generic.
Also note that any cover can be obtained by composing a given cover and an appropriate
deck transformation of $\widetilde M$.

Let $D(\gamma)$ be the set of all the double points of $\gamma$.
Let $d\in D(\gamma)$ be any element.
At $d$, the curve $\gamma$ splits into two closed loops $\gamma_1$ and $\gamma_2$
based at $d$.
Since $\gamma$ is non-null-homotopic, at least one of $\gamma_1$ and $\gamma_2$
must be non-null-homotopic.
The followings are obviously equivalent:
\begin{itemize}
\item Either $\gamma_1$ or $\gamma_2$ is null-homotopic.
\item There exists a cover $\widetilde\gamma$ of $\gamma$ and a double point $\widetilde{d}$ of 
$\widetilde\gamma$ such that $p_M(\widetilde{d})=d$.
\item For any cover $\widetilde\gamma$ of $\gamma$, there exists a double point $\widetilde{d}$
of $\widetilde\gamma$ such that $p_M(\widetilde{d})=d$.
\end{itemize}
We define $D_\pm(\gamma)$ to be the subset of $D(\gamma)$ consisting of those double points
satisfying the conditions above, and define $D_0(\gamma)$ to be its complement.
In other words, double points in $D_\pm(\gamma)$ correspond to self-intersections
of the covers, and the double points in $D_0(\gamma)$ correspond to 
mutual intersections of covers with distinct images.

For each point $d\in D(\gamma)$, we wish to define 
$\sgn(\gamma,d)\in\{-1,0,+1\}$ 
so that the winding number of $\gamma$ is equal to the sum 
$I(\gamma)=\sum_{d\in D(\gamma)} \sgn(\gamma,d)$, and we will succeed in some cases.
For a double point  $d\in D_0(\gamma)$, we define $\sgn(\gamma,d)$ to be 0.
So, the sum $I(\gamma)$ above will actually be equal to
$\sum_{d\in D_\pm(\gamma)} \sgn(\gamma,d)$.

We first define $\sgn(\widetilde\gamma,\widetilde d)$
of a double point $\widetilde d$ of a cover $\widetilde\gamma$ of $\gamma$.
Let $t_1<t_2$ be the real numbers such that 
$\widetilde\gamma(t_1)=\widetilde\gamma(t_2)=\widetilde d$. 
Choose a small positive number $\delta$.
We define $\sgn(\widetilde\gamma, \widetilde d)$ to be $+1$ (resp. $-1$)
if the arc $\widetilde\gamma((t_2-\delta, t_2+\delta))$ 
(drawn horizontally in Fig.\ref{fig:sign})
crosses the arc $\widetilde\gamma((t_1-\delta, t_1+\delta))$ 
(drawn vertically in Fig.\ref{fig:sign})
from left to right (resp. from right to left).
\begin{figure}[htbp]
  \begin{center}
   \includegraphics{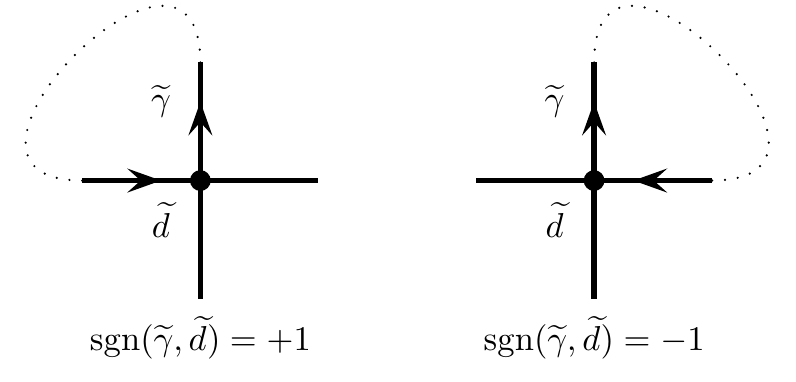}
  \end{center}
\caption{Sign convention for a double point of $\widetilde\gamma$.}\label{fig:sign}
\end{figure}
Let $G_M=\pi_1(M,*)$ be the group of the deck transformations of $p_M$,
and let $w: G_M\to \{\pm1\}$ be the orientation homomorphism.
For $T\in G_M$, the following identity holds:
\[
\sgn(T\circ \widetilde\gamma, T(\widetilde d))=w(T)
\sgn(\widetilde\gamma, \widetilde d).
\]
Let $H_{\widetilde\gamma}$ denote the infinite cyclic subgroup of $G_M$ generated by 
the deck transformation $T_0$ which sends $\widetilde\gamma(0)$ to $\widetilde\gamma(1)$, 
then $H_{\widetilde\gamma}$ acts freely on the image $\widetilde\gamma(\bbR)$;
therefore, the following identity holds:
\[
\sgn(\widetilde\gamma, T_0(\widetilde d))=w([\gamma])
\sgn(\widetilde\gamma, \widetilde d).
\]

If we deform $\gamma$ by a regular homotopy, then it lifts to regular homotopies of the covers,
and if a birth/death of two double poits $d$, $e$ of $\gamma$ occurs, then corresponding 
births/deaths occur for self- and/or  mutual intersections of the covers, and
each birth/death pair in $\widetilde M$ must be one of the following:
\begin{itemize}
\item[(1)] mutual intersections of covers with distinct images,
\item[(2)] double points of the same cover with opposite signs.
\end{itemize}
In case (1), $d$ and $e$ are both points in $D_0(\gamma)$, and, in case (2),
they are both points in $D_\pm(\gamma)$.

\medbreak
In the rest of this section, we consider the case when $M$ is orientable.
The non-orientable case will be treated in the next section.

Let us define $\sgn(\gamma,d)$ for $d\in D_\pm(\gamma)$.
First fix a cover $\widetilde \gamma$ of $\gamma$;
then, take  any double point $\widetilde d$ of $\widetilde\gamma$ such that
$p_M(\widetilde d)=d$.
Now set $\sgn(\gamma,d)=\sgn(\widetilde\gamma,\widetilde d)$.
This does not depend on the choice of $\widetilde\gamma$ or $\widetilde d$.

The following gives a Whitney-type formula for the winding number 
$W(\gamma)\in\bbZ$ geometrically defined in \cite{MY}
in the case when $M$ is orientable.

\begin{theorem}   \label{thm:orientable} Suppose that
$M$ is a complete euclidean/hyperbolic orientable surface
and that $\gamma:S^1\to M$ is a non-null-homotopic generic regular closed curve.
Then
\[
W(\gamma)=\sum_{d\in D(\gamma)} \sgn(\gamma,d)=\sum_{d\in D_\pm(\gamma)} \sgn(\gamma,d)
\]
\end{theorem}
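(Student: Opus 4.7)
The strategy is to reduce the identity to the classical Whitney formula applied to a closed-up lift in $\widetilde M$. I first check that $I(\gamma):=\sum_{d\in D_\pm(\gamma)}\sgn(\gamma,d)$ is itself a regular homotopy invariant. This is already essentially contained in the preceding birth/death analysis: pairs of type (1) consist of two points of $D_0(\gamma)$ and contribute $0+0$, while pairs of type (2) lift to two self-intersections of the same cover with opposite signs, which cancel in $I(\gamma)$. Combined with the regular homotopy invariance of $W(\gamma)$ established in \cite{MY}, this means I may freely replace $\gamma$ by any regularly homotopic generic representative.

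I then unpack $I(\gamma)$ via a fixed cover $\widetilde\gamma:\bbR\to\widetilde M$. Since $M$ is orientable, $w\equiv +1$, so the identity $\sgn(\widetilde\gamma, T_0(\widetilde d))=\sgn(\widetilde\gamma,\widetilde d)$ from the excerpt shows that each $d\in D_\pm(\gamma)$ lifts to a single $H_{\widetilde\gamma}$-orbit of double points of $\widetilde\gamma$, all carrying a common sign. Selecting the representative $\widetilde d=(t_1,t_2)$ with $0\le t_1<1$ yields a sign-preserving bijection between $D_\pm(\gamma)$ and a finite set of double points of the restricted arc $\widetilde\alpha=\widetilde\gamma|_{[0,1]}$, whence $I(\gamma)=\sum_{\widetilde d}\sgn(\widetilde\gamma,\widetilde d)$ summed over these representatives.

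I now close $\widetilde\alpha$ into a generic regular closed curve $\bar\gamma$ in $\widetilde M$ by adjoining a smooth arc $\beta$ from $\widetilde\gamma(1)$ back to $\widetilde\gamma(0)$, following a geodesic segment with tangent vectors smoothed to match $\widetilde\alpha$ at both endpoints. Applying Whitney's classical formula in the plane or disk $\widetilde M$ to $\bar\gamma$ gives
\[
W(\bar\gamma)=\sum_{\widetilde d\in D(\bar\gamma)}\sgn(\bar\gamma,\widetilde d).
\]
After a preliminary regular homotopy of $\gamma$ and a generic choice of $\beta$, I arrange that $\beta$ meets $\widetilde\alpha$ only at the two endpoints $\widetilde\gamma(0)$ and $\widetilde\gamma(1)$, so that the double points of $\bar\gamma$ are exactly the representatives selected in the previous step and the right-hand side equals $I(\gamma)$.

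The final step is to identify the classical Whitney index $W(\bar\gamma)$ with the winding number $W(\gamma)$ of \cite{MY}. This is the main technical obstacle: it amounts to showing that the rotation contributed by the closing arc $\beta$, together with its smoothing at the endpoints, matches the convention used to define $W(\gamma)$ in \cite{MY} — intuitively, that $W(\gamma)$ is set up to equal precisely the Whitney index of such a closed-up lift, up to a correction depending only on the free homotopy class of $\gamma$ and not on its regular homotopy class within it. Once this identification is in place, the chain $W(\gamma)=W(\bar\gamma)=I(\gamma)$ yields the theorem.
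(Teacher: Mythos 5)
Your first step --- the regular homotopy invariance of $I(\gamma)$ via the birth/death dichotomy --- coincides with the paper's argument. The second half, however, has two genuine gaps. First, the claimed sign-preserving bijection between $D_\pm(\gamma)$ and the double points of the restricted arc $\widetilde\alpha=\widetilde\gamma|_{[0,1]}$ fails: the generator $T_0$ of $H_{\widetilde\gamma}$ shifts \emph{both} parameters of a double point by $1$ simultaneously, so normalizing the orbit representative to have $t_1\in[0,1)$ does not force $t_2<1$. Concretely, if $d\in D_\pm(\gamma)$ corresponds to parameters $s_1<s_2$ on $S^1$ and the null-homotopic loop at $d$ is the one traversed from $s_2$ back around to $s_1$, the relevant double point of $\widetilde\gamma$ is $\widetilde\gamma(s_2)=\widetilde\gamma(s_1+1)$, which is not a self-intersection of $\widetilde\gamma|_{[0,1]}$. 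Hence even with a closing arc $\beta$ disjoint from the interior of $\widetilde\alpha$, the double points of $\bar\gamma$ do not account for all of $D_\pm(\gamma)$, and $\sum_{\widetilde d\in D(\bar\gamma)}\sgn(\bar\gamma,\widetilde d)$ need not equal $I(\gamma)$.

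Second, and more seriously, the step you yourself flag as ``the main technical obstacle'' --- identifying the classical Whitney index $W(\bar\gamma)$ of the closed-up lift with the winding number $W(\gamma)$ of \cite{MY} --- is precisely where the content of the theorem lies, and you do not prove it; you only assert that the conventions should agree up to a correction depending on the free homotopy class. Even granting that, one must still show the correction vanishes, which requires evaluating both invariants on a reference curve in the class. That is exactly what the paper does instead of closing up the lift: using the regular homotopy classification of \cite{MY}, it replaces $\gamma$ by a curve $\gamma_2$ obtained by perturbing the closed geodesic (or, when $\inf L_\gamma=0$, a horocycle around a cusp, a case your proposal omits) and adding $k=W(\gamma)$ kinks; the cover of the geodesic or horocycle is embedded, so $I(\gamma_1)=0$, the kinks give $I(\gamma_2)=k$, and invariance of $I$ yields $I(\gamma)=k=W(\gamma)$. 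To repair your route you would have to carry out the Whitney-index comparison in full (including the contribution of $\beta$ and the missed double points above), or else fall back on this normalization argument.
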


\begin{proof} 
We first show that $I(\gamma)=\sum_{d\in D_\pm(\gamma)} \sgn(\gamma,d)$ is
invariant under regular homotopies of $\gamma$.
Fix a cover $\widetilde\gamma$.
For each $d\in D_\pm(\gamma)$, choose any double point 
$\widetilde d$ of $\widetilde\gamma$ such that $p_M(\widetilde d)=d$.
Then the set $D(\widetilde\gamma)$ of the double points of $\widetilde\gamma$
is the disjoint union
\[
\bigsqcup_{T\in H_{\widetilde\gamma}}\{ T(\widetilde d)~|~d\in D_\pm(\gamma)\}.
\]
Also note that, for $T\in H_{\widetilde\gamma}$, we have
$\sgn(\widetilde\gamma, T(\widetilde d))=\sgn(\widetilde\gamma, \widetilde d)$, 
because $T$ preserves orientation.
A regular homotopy of a curve on a surface may change the sum $I(\gamma)$
only when there are births/deaths of double points in $D_\pm(\gamma)$.
But corresponding $G_M$-equivariant births/deaths pairs for $\widetilde\gamma$
must have opposite signs.  Thus $I(\gamma)$ is invariant under regular homotopies.

Next we show that $W(\gamma)=I(\gamma)$.  Let $k$ denote the integer $W(\gamma)$.
Pick a non-null-homotopic regular closed curve $\gamma$ on $M$.
Let $L_\gamma$ be the set of the length of closed curves freely homotopic to $\gamma$.
There are two cases.  

If $\inf L_\gamma>0$, then there is a closed geodesic $\gamma_0$ on $M$ 
which is homotopic to $\gamma$, and its winding number $W(\gamma_0)$ is 0 \cite{MY}.
Slightly perturb $\gamma_0$ by a regular homotopy to obtain a generic regular closed curve
$\gamma_1$.  Then add $k$ small kinks to $\gamma_1$ to obtain a 
generic regular closed curve $\gamma_2$.
Then $\gamma_2$ is homotopic to $\gamma$ and its winding number is equal to $k$.
Therefore, $\gamma$ and $\gamma_2$ are regularly homotopic \cite{MY}.
Covering these steps in $\widetilde M$, we obtain 
a cover $\widetilde \gamma_2$ of $\gamma_2$ as follows: first take
a cover $\widetilde\gamma_0$ of the geodesic $\gamma_0$;
it has no double points, because it is a geodesic on $\bbE^2$ or $\bbH^2$.
Then slightly perturb it by an $H_{\widetilde\gamma_0}$-equivariant regular homotopy
to obtain $\widetilde\gamma_1$.  It still has no double points, so, $I(\gamma_1)$ is 0.
Then $H_{\widetilde\gamma_0}$-equivariantly add appropriate kinks 
to obtain $\widetilde\gamma_2$.
Then we see that $I(\gamma_2)=k$.
By the regular homotopy invariance of $I(\text{--})$, we obtain
$I(\gamma)=k=W(\gamma)$.

Next let us suppose that $\inf L_\gamma$ is 0.  In this case, 
$M$ is hyperbolic and $\gamma$ is homotopic to a holocycle $\gamma_0$ around a cusp.
The winding number of $\gamma_0$ is 0 \cite{MY}.  The rest of the argument is similar to
the $\inf L_\gamma >0$ case, and this completes the proof.
\end{proof}

\begin{corollary}   \label{cor:orientable} Let $M$ be as above, and assume $\gamma$ and  $\gamma'$ are
non-null-homotopic generic regular closed curves on $M$.  Then 
$\gamma$ and $\gamma'$ are regularly homotopic if and only if the following holds:
\begin{itemize}
\item[(1)] $\gamma$ and $\gamma'$ are freely homotopic, and
\item[(2)] $I(\gamma)=I(\gamma')$.
\end{itemize}
\end{corollary}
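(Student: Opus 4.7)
The plan is to deduce the corollary directly from Theorem~\ref{thm:orientable} combined with the main classification result of \cite{MY}, which asserts that on a complete euclidean/hyperbolic surface, two regular closed curves are regularly homotopic if and only if they are freely homotopic and have the same winding number $W$.

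For the forward implication, I would observe that any regular homotopy is in particular a free homotopy, so (1) is immediate. For (2), I would invoke the first half of the proof of Theorem~\ref{thm:orientable}, where it is shown that $I(\text{--})$ is invariant under regular homotopies (the sign of each double point of a fixed cover is preserved by the orientation-preserving deck transformations in $H_{\widetilde\gamma}$, and each birth/death pair of double points in $D_\pm$ contributes cancelling signs). Alternatively, one can avoid re-deriving invariance and simply note that Theorem~\ref{thm:orientable} identifies $I(\gamma)$ with $W(\gamma)$, which is a regular homotopy invariant by \cite{MY}.

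For the converse, assume $\gamma$ and $\gamma'$ are freely homotopic and $I(\gamma)=I(\gamma')$. Since both curves are non-null-homotopic and generic, Theorem~\ref{thm:orientable} applies to each, giving
\[
W(\gamma)=I(\gamma)=I(\gamma')=W(\gamma').
\]
Together with the hypothesis that $\gamma$ and $\gamma'$ are freely homotopic, the classification theorem of \cite{MY} then yields that $\gamma$ and $\gamma'$ are regularly homotopic.

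There is no serious obstacle here; the corollary is essentially a packaging of Theorem~\ref{thm:orientable} with the earlier classification result. The only minor point to double-check is that the hypotheses of the classification theorem of \cite{MY} are met in both cases, which they are since $M$ is a complete euclidean/hyperbolic orientable surface and the curves are regular closed curves on $M$.
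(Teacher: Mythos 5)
Your proposal is correct and follows exactly the route the paper intends: the corollary is stated without proof precisely because it is the combination of Theorem~\ref{thm:orientable} (which identifies $I(\gamma)$ with $W(\gamma)$) with the classification result of \cite{MY} that free homotopy class together with $W$ determines the regular homotopy class. Both directions are handled as the paper implicitly does, so there is nothing to add.
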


\bigbreak
\section{The Non-orientable Surface Case}

In this section, we assume that $M$ is non-orientable.

\subsection{The case when $\gamma$ is orientation reversing}
When a regular closed curve on $M$ is orientation reversing, the generalized winding 
number $W(\gamma)$ was defined to be an element of $\bbZ/2\bbZ$.

For $d\in D(\gamma)$, we define $\sgn(\gamma,d)\in\{-1,0,+1\}$ as in the 
previous section.  When $d\in D_\pm(\gamma)$, its sign depend on the choice
of $\widetilde\gamma$ and $\widetilde d$, but its mod 2 value is well-defined.
The argument in the previous section can be used to prove the following.

\begin{theorem} \label{thm:ori-rev} Suppose that
$M$ is a complete euclidean/hyperbolic non-orientable surface
and that $\gamma:S^1\to M$ is an orientation reversing generic regular closed curve.
Then
\[
W(\gamma)=\sum_{d\in D(\gamma)} \sgn(\gamma,d) +2\bbZ
=\sum_{d\in D_\pm(\gamma)} \sgn(\gamma,d) +2\bbZ
\in \bbZ/2\bbZ
\]
\end{theorem}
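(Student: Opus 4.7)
The plan is to run the argument of Theorem~\ref{thm:orientable} verbatim, interpreting every sign modulo~$2$. Only one new wrinkle appears: in the non-orientable setting $\sgn(\gamma,d)$ for $d\in D_\pm(\gamma)$ can flip when one varies the cover (by $\sgn(T\circ\widetilde\gamma,T(\widetilde d))=w(T)\sgn(\widetilde\gamma,\widetilde d)$) or the preimage (by $\sgn(\widetilde\gamma,T_0(\widetilde d))=w([\gamma])\sgn(\widetilde\gamma,\widetilde d)$, with $w([\gamma])=-1$ for our orientation-reversing $\gamma$). In either case the ambiguity is a global sign, which dies mod~$2$, so $\sum_{d\in D_\pm(\gamma)}\sgn(\gamma,d)+2\bbZ\in\bbZ/2\bbZ$ is well-defined (equal to $\#D_\pm(\gamma)$ mod~$2$), and the two displayed sums in the theorem agree because points of $D_0(\gamma)$ contribute $0$ to either.

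For regular-homotopy invariance of this mod-$2$ sum, the birth/death dichotomy of \S2 still applies: either both new double points lie in $D_0(\gamma)$, contributing $0+0$, or they are the projections of a birth/death pair of self-intersections of a single cover $\widetilde\gamma$. In the orientable proof, the latter pair carried opposite signs $\pm1$; here, I only need that both of its projections lie in $D_\pm(\gamma)$ and therefore contribute $1+1\equiv 0\pmod 2$. So in both cases the change is zero mod~$2$.

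To identify this invariant with $W(\gamma)$, I would replay the reference-curve argument of Theorem~\ref{thm:orientable}: pick a closed geodesic (when $\inf L_\gamma>0$) or a holocycle around a cusp (when $\inf L_\gamma=0$, so $M$ is hyperbolic) $\gamma_0$ freely homotopic to $\gamma$; by \cite{MY} we have $W(\gamma_0)=0$, and any cover $\widetilde\gamma_0$ has no self-intersections, so $D_\pm(\gamma_0)=\emptyset$ and $I(\gamma_0)=0$. A small $H_{\widetilde\gamma_0}$-equivariant perturbation keeps the cover embedded, and then $H_{\widetilde\gamma_0}$-equivariantly inserting $k\in\{0,1\}$ small kinks produces a generic curve $\gamma_2$ freely homotopic to $\gamma$ with $W(\gamma_2)=k+2\bbZ$ and $I(\gamma_2)\equiv k\pmod 2$. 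Choosing $k$ so that $W(\gamma_2)=W(\gamma)$, the regular-homotopy classification of \cite{MY} makes $\gamma$ and $\gamma_2$ regularly homotopic, and the mod-$2$ invariance from the previous paragraph yields $I(\gamma)\equiv I(\gamma_2)\equiv k\equiv W(\gamma)\pmod 2$. The one point that really needs verification is that each small kink genuinely creates one new element of $D_\pm$ rather than of $D_0$; this holds because the kink is supported in an evenly covered disk, so its self-intersection lifts to a self-intersection of the same cover. Apart from this, every step is a literal mod-$2$ reading of the orientable proof.
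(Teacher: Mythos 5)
Your proposal is correct and follows exactly the route the paper intends: the paper's entire "proof" of this theorem is the remark that the mod-2 value of $\sgn(\gamma,d)$ is well-defined and that "the argument in the previous section can be used," and you have simply spelled out that mod-2 reading of the Theorem~\ref{thm:orientable} argument (well-definedness, invariance under births/deaths, and the geodesic/holocycle-plus-kinks comparison). Your extra check that each inserted kink contributes to $D_\pm$ rather than $D_0$ is a detail the paper leaves implicit but is handled correctly.
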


\begin{corollary}  \label{cor:ori-rev} 
Let $M$ be as above, and assume $\gamma$ and  $\gamma'$ are
orientation reversing generic regular closed curves on $M$.  Then 
$\gamma$ and $\gamma'$ are regularly homotopic if and only if the following holds:
\begin{itemize}
\item[(1)] $\gamma$ and $\gamma'$ are freely homotopic, and
\item[(2)] the numbers of elements of $D_\pm(\gamma)$ and 
$D_\pm(\gamma')$ have the same parity.
\end{itemize}
\end{corollary}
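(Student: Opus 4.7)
The plan is to deduce Corollary \ref{cor:ori-rev} from Theorem \ref{thm:ori-rev} together with the regular homotopy classification established in \cite{MY}, namely that two freely homotopic regular closed curves on $M$ are regularly homotopic if and only if they share the same generalized winding number. Since $W(\gamma)$ is regular-homotopy invariant (and free homotopy class obviously is), the ``only if'' direction is immediate once we show that condition (2) is equivalent to $W(\gamma)=W(\gamma')$ in $\bbZ/2\bbZ$.

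For the equivalence of (2) with equality of winding numbers, the key observation is that each individual sign contributes $1$ mod $2$: for $d\in D_\pm(\gamma)$, the value $\sgn(\gamma,d)$ is defined as $\sgn(\widetilde\gamma,\widetilde d)\in\{+1,-1\}$ for some double point $\widetilde d$ of a cover, and although the overall sign depends on the choice of $\widetilde\gamma$ and $\widetilde d$ in the orientation-reversing case (as already noted in the text before Theorem \ref{thm:ori-rev}), its reduction modulo $2$ is always $1$. Hence
\[
\sum_{d\in D_\pm(\gamma)}\sgn(\gamma,d)\ \equiv\ \#D_\pm(\gamma)\pmod 2,
\]
and Theorem \ref{thm:ori-rev} gives $W(\gamma)\equiv \#D_\pm(\gamma)\pmod 2$. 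The same identity applies to $\gamma'$, so $W(\gamma)=W(\gamma')$ in $\bbZ/2\bbZ$ if and only if $\#D_\pm(\gamma)$ and $\#D_\pm(\gamma')$ have the same parity.

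Putting the pieces together: assuming (1) and (2), the two curves are freely homotopic with equal winding numbers, so they are regularly homotopic by \cite{MY}; conversely, if they are regularly homotopic then they are freely homotopic and have the same $W$, whence the parity condition (2) by the mod $2$ computation above. There is no real obstacle here beyond verifying that $\sgn(\gamma,d)$ is odd for every $d\in D_\pm(\gamma)$, which is a direct consequence of the definition; the rest is a formal combination of Theorem \ref{thm:ori-rev} with the classification theorem from \cite{MY}.
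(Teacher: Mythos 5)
Your proposal is correct and is essentially the argument the paper intends (the corollary is stated without proof as an immediate consequence of Theorem \ref{thm:ori-rev} and the classification result of \cite{MY}): since $\sgn(\gamma,d)\in\{\pm1\}$ for every $d\in D_\pm(\gamma)$, the sum in the theorem reduces mod $2$ to the cardinality of $D_\pm(\gamma)$, so condition (2) is exactly the statement $W(\gamma)=W(\gamma')$ in $\bbZ/2\bbZ$, and the rest follows formally.
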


\bigbreak
\subsection{The case when $\gamma$ is orientation preserving} 
In this case, we cannot define $\sgn(\gamma, d)$ which is independent of the
choice of the cover $\widetilde\gamma$.

Let $\widetilde\gamma$ be a fixed cover of $\gamma$.  
Take a double point $d\in D_\pm(\gamma)$ and a point $\widetilde d$ on 
$\widetilde\gamma$ satisfying $p_M(\widetilde d)=d$.
Since $\gamma$ is orientation preserving, the value of $\sgn(\widetilde\gamma,\widetilde d)$
is independent of the choice of $\widetilde d$.
The sum
\[
i_{\widetilde\gamma}(\gamma)=\sum_{d\in D(\gamma)}\sgn(\widetilde\gamma,\widetilde d)\in\bbZ.
\]
is independent of the choice of $\widetilde d$'s, but it does depend on the choice
of the cover $\widetilde\gamma$.
If $\widehat \gamma$ is another cover of $\gamma$, then there is a deck transformation
$T$ such that $\widehat\gamma=T\circ\widetilde\gamma$, and we have
\[
\sgn(\widehat\gamma, \widehat d)=w(T) \sgn(\widetilde\gamma,\widetilde d),
\]
where $\widehat d=T(\widetilde d)$; therefore, we have
\[
i_{\widehat\gamma}(\gamma)=w(T) i_{\widetilde\gamma}(\gamma).
\]
On the other hand, 
if we deform $\gamma$ by a regular homotopy, then $\widetilde\gamma$ is deformed
by the lift of the regular homotopy which starts from $\widetilde\gamma$;
and this does not change the value of $i_{\widetilde\gamma}(\gamma)$.

There are two cases: the case when $\gamma$ is reversible and the case when $\gamma$ is not
reversible.  Let us recall the notion of reversibility from \cite{MY}.
\begin{definition} \label{def:rev}
(1) Let $G$ be a group with a fixed non-trivial homomorphism $w:G\to\{\pm1\}$.
An element $g\in G$ is said to be {\it reversible} if there is an element $h$ of the 
centralizer $C_G(g)$ of $g$ in $G$ such that $w(h)=-1$.
\\
(2) Let $M$ be a non-orientable surface.  A loop $\gamma$ on $M$ based at $p\in M$ is said to be
{\it reversible} if the element $[\gamma]\in\pi_1(M,p)$ is reversible
with respect to the orientation homomorphism $w:\pi_1(M,p)\to\{\pm1\}$.
If $\xi$ is an orientation reversing closed curve on $M$ based at $p$ such that
$[\xi^*\gamma\xi]=[\gamma]\in \pi_1(M,p)$ ({\it i.e.} $[\xi]\in
C_{\pi_1(M,p)}([\gamma])$), then we say that $\xi$ {\it reverses} $\gamma$.
Here $\xi^*$ denotes the inverse of the path $\xi$.
\end{definition} 

{\bf Let us first assume that $\gamma$ is reversible}.  In this case, we difine
\[
I(\gamma)=|i_{\widetilde\gamma}(\gamma)|.
\]
This does not depend on the choice of the cover $\widetilde\gamma$, and it is easy to check that
this is a regular homotopy invariant.  
So the following can be checked as in the previous section.

\begin{theorem} \label{thm:reversible} Suppose that
$M$ is a complete euclidean/hyperbolic non-orientable surface
and that $\gamma:S^1\to M$ is an orientation preserving non-null-homotopic
generic regular closed curve.
Further assume that $\gamma$ is reversible.
Then
\[
W(\gamma)=I(\gamma) \in \bbZ
\]
\end{theorem}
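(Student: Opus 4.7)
The plan is to follow the three-step template from the proof of Theorem \ref{thm:orientable}: (a) check that $I(\gamma)$ is well-defined, (b) check that it is a regular homotopy invariant, and (c) reduce the equality $W(\gamma) = I(\gamma)$ to a standard model consisting of a closed geodesic (or a horocycle) with $W(\gamma)$ small kinks attached.

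Step (a) is immediate from the identity $i_{\widehat\gamma}(\gamma) = w(T)\, i_{\widetilde\gamma}(\gamma)$ recorded in the preamble: any two covers are related by a deck transformation $T\in G_M$, which changes $i_{\widetilde\gamma}(\gamma)$ by the factor $w(T)\in\{\pm1\}$, so $|i_{\widetilde\gamma}(\gamma)|$ does not depend on $T$. For step (b), I would lift a given regular homotopy of $\gamma$ to the homotopy of the fixed cover $\widetilde\gamma$. Birth/death pairs in $D_0(\gamma)$ contribute $0$ to the sum. For a birth/death pair in $D_\pm(\gamma)$, the two lifted double points $\widetilde d_1,\widetilde d_2$ of $\widetilde\gamma$ carry opposite signs; since $w([\gamma])=+1$ (as $\gamma$ is orientation preserving), every $H_{\widetilde\gamma}$-orbit in $D(\widetilde\gamma)$ has constant sign, so $\widetilde d_1$ and $\widetilde d_2$ must lie in distinct orbits, hence project to distinct double points $d,e \in D_\pm(\gamma)$ whose contributions $\sgn(\widetilde\gamma,\widetilde d_1)$ and $\sgn(\widetilde\gamma,\widetilde d_2)$ cancel in $i_{\widetilde\gamma}(\gamma)$. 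Thus $i_{\widetilde\gamma}(\gamma)$, and so $I(\gamma)$, is preserved.

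For step (c), I would apply the same dichotomy on $\inf L_\gamma$ as in Theorem \ref{thm:orientable}: either $\gamma$ is freely homotopic to a closed geodesic $\gamma_0$, or $M$ is hyperbolic and $\gamma$ is freely homotopic to a horocycle $\gamma_0$ around a cusp. In either case, $W(\gamma_0)=0$ by \cite{MY}, and a cover $\widetilde\gamma_0$ of $\gamma_0$ has no self-intersections, so $i_{\widetilde\gamma_0}(\gamma_0)=0$. Slightly perturb $\gamma_0$ by a regular homotopy to a generic $\gamma_1$, keeping $i_{\widetilde\gamma_1}(\gamma_1)=0$, and then add $k = W(\gamma)$ small kinks all of the same orientation to obtain $\gamma_2$; by \cite{MY} $\gamma_2$ is regularly homotopic to $\gamma$, and each kink contributes $\pm1$ with a common sign to $i_{\widetilde\gamma_2}(\gamma_2)$, giving $I(\gamma_2) = |i_{\widetilde\gamma_2}(\gamma_2)| = k = W(\gamma)$. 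Combined with the invariance from step (b), this yields $I(\gamma) = W(\gamma)$.

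The step I expect to be most delicate is reconciling the range conventions in (c): the invariant $W(\gamma)$ defined in \cite{MY} in the orientation-preserving reversible case must take values in $\bbZ_{\geq 0}$, because reversibility of $\gamma$ forces an ambiguity of sign on any cover-dependent construction (analogous to the non-orientable null-homotopic case, where $W$ was defined via absolute value). Once this is granted, the standard model $\gamma_2$ can be arranged to have exactly $k \geq 0$ kinks of a consistent sign, and the kink-counting carries over verbatim from the orientable case; no genuinely new geometric input beyond reversibility, which is what legitimates passing to absolute values on the cover, is required.
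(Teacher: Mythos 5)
Your proposal is correct and follows essentially the same route as the paper, which for Theorem~\ref{thm:reversible} simply notes that well-definedness and regular homotopy invariance of $I(\gamma)=|i_{\widetilde\gamma}(\gamma)|$ follow from the identity $i_{\widehat\gamma}(\gamma)=w(T)\,i_{\widetilde\gamma}(\gamma)$ and that the rest ``can be checked as in the previous section,'' i.e.\ via the same reduction to a perturbed geodesic or horocycle with $k$ kinks used for Theorem~\ref{thm:orientable}. Your steps (a)--(c), including the observation that reversibility forces the absolute-value convention on both $W$ and $I$, are exactly the details the paper leaves to the reader.
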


\begin{corollary}  \label{cor:reversible} 
Let $M$ be as above, and assume $\gamma$ and  $\gamma'$ are
orientation preserving non-null-homotopic generic regular closed curves on $M$
and are reversible.
Then 
$\gamma$ and $\gamma'$ are regularly homotopic if and only if the following holds:
\begin{itemize}
\item[(1)] $\gamma$ and $\gamma'$ are freely homotopic, and
\item[(2)] $I(\gamma)=I(\gamma')$
\end{itemize}
\end{corollary}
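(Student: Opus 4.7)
The plan is to deduce this corollary directly from Theorem \ref{thm:reversible} together with the regular homotopy classification theorem of \cite{MY}, which states that two freely homotopic regular closed curves on a complete euclidean/hyperbolic surface are regularly homotopic if and only if they have the same generalized winding number. Both implications should then drop out with very little work.

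For the ``only if'' direction I would argue as follows: a regular homotopy between $\gamma$ and $\gamma'$ is in particular a free homotopy, giving (1); and by Theorem \ref{thm:reversible} we have $I(\gamma)=W(\gamma)$ and $I(\gamma')=W(\gamma')$, while $W$ is a regular homotopy invariant by \cite{MY}, so (2) follows. For the ``if'' direction, assume (1) and (2). By Theorem \ref{thm:reversible} condition (2) is equivalent to $W(\gamma)=W(\gamma')\in\bbZ$, and combining this with (1) the classification theorem of \cite{MY} yields that $\gamma$ and $\gamma'$ are regularly homotopic.

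The one technical point worth verifying is that the two hypotheses are mutually consistent, namely that reversibility, as formulated in Definition \ref{def:rev}, depends only on the conjugacy class of $[\gamma]$ in $\pi_1(M,*)$ (since the centralizer condition is preserved under conjugation and $w$ is a homomorphism). Consequently, if $\gamma$ is reversible and $\gamma'$ is freely homotopic to $\gamma$, then $\gamma'$ is also reversible, so both $W(\gamma)$ and $W(\gamma')$ are genuinely defined as integers by the prescription given at the start of \S3.2 and the equality in (2) makes sense. Beyond this bookkeeping point I do not expect any real obstacle, since all the content of the corollary is already contained in Theorem \ref{thm:reversible} and the classification result of \cite{MY}.
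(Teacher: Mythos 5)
Your proposal is correct and matches the paper's (implicit) argument: the paper offers no separate proof of the corollary, deriving it exactly as you do from Theorem \ref{thm:reversible} together with the classification theorem of \cite{MY} stating that free homotopy class plus winding number determines the regular homotopy class. Your added observation that reversibility is invariant under conjugation, hence under free homotopy, is a correct and worthwhile bookkeeping check.
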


\bigbreak
{\bf Finally let us assume that $\gamma$ is not reversible}.
In this case, we could not define $W(\gamma)\in\bbZ$ in \cite{MY},
because we had to choose the sign.
More precisely, we needed to fix an orientation preserving non-null-homotopic 
non-reversible closed loop $\gamma_0$ of $M$
and needed to fix a lift 
$\widetilde\gamma_0:[a,b]\to\widetilde M$ to define
the invariant $W_{\widetilde\gamma_0}(\gamma)$
for a regular closed curve $\gamma$ which is freely homotopic to $\gamma_0$.

The same is true for ``$I(\gamma)$''.
In our situation, given $\gamma$, we may choose $\gamma_0$ to be either the shortest 
closed geodesic or any holocyle which is freely homotopic to $\gamma$.
Fixing a lift in the paragraph above corresponds to
taking a cover $\widetilde\gamma$ and we can only define
$I_{\widetilde\gamma_0}(\gamma)$ which depends on the choice of
such a $\widetilde\gamma_0$.

By assumption, there is a homotopy from the chosen $\widetilde\gamma_0$ to
a cover $\widetilde\gamma$ of $\gamma$.  Now define
\[
I_{\widetilde\gamma_0}(\gamma)=i_{\widetilde\gamma}(\gamma).
\]
\begin{proposition} \label{prop:wd} $I_{\widetilde\gamma_0}(\gamma)$
does not depend on the free homotopy between $\gamma$ and $\gamma_0$.
\end{proposition}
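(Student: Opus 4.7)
The plan is to show that if $h$ and $h'$ are two free homotopies from $\gamma_0$ to $\gamma$, then the covers $\widetilde\gamma$ and $\widetilde\gamma'$ of $\gamma$ obtained by lifting $h$ and $h'$ starting from the fixed $\widetilde\gamma_0$ differ by a deck transformation $T$ with $w(T)=+1$. The transformation formula $i_{\widehat\gamma}(\gamma)=w(T)\,i_{\widetilde\gamma}(\gamma)$ already established in this subsection then gives $i_{\widetilde\gamma}(\gamma)=i_{\widetilde\gamma'}(\gamma)$, which is exactly the independence claim.

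To analyze $T$, I would form the concatenation $H=h'*\bar h:S^1\times[0,1]\to M$, a free self-homotopy of $\gamma_0$, and track a fixed point $*_0\in S^1$ to obtain a loop $\alpha(s)=H(*_0,s)$ based at $\gamma_0(*_0)$. Lifting $H\circ(p_{S^1}\times\mathrm{id})$ to $\widetilde H:\bbR\times[0,1]\to\widetilde M$ with $\widetilde H(\cdot,0)=\widetilde\gamma_0$, the first half agrees with the lift $\widetilde h'$ starting from $\widetilde\gamma_0$, so $\widetilde H(\cdot,\tfrac12)=\widetilde\gamma'$; the second half is the unique lift of $\bar h$ starting at $\widetilde\gamma'=T\circ\widetilde\gamma$, namely $s\mapsto T\circ\widetilde h(\cdot,2-2s)$, ending at $T\circ\widetilde\gamma_0$. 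On the other hand, by the standard path-lifting description of the deck group, $\widetilde H(0,1)$ is precisely $T_\alpha(\widetilde\gamma_0(0))$, where $T_\alpha$ is the deck transformation corresponding to $[\alpha]\in\pi_1(M,\gamma_0(*_0))$. Since a deck transformation is determined by its value at a single point, $T=T_\alpha$.

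Because $H$ restricts to $\gamma_0$ on both boundary circles, it descends to a map of a torus $S^1\times S^1\to M$ whose two circle factors represent $[\gamma_0]$ and $[\alpha]$; these classes therefore commute, so $[\alpha]\in C_{\pi_1(M,\gamma_0(*_0))}([\gamma_0])$. Non-reversibility is a conjugation-invariant property (since conjugation carries centralizers to centralizers and $w$ is unchanged under conjugation), so $\gamma_0$ is non-reversible together with $\gamma$; by Definition \ref{def:rev}(2) every element of this centralizer has $w$-value $+1$, giving $w(T)=w(T_\alpha)=w(\alpha)=+1$ and finishing the argument. The main technical hurdle is the bookkeeping in the middle paragraph — matching the geometric deck transformation $T$ that relates the two lifted covers of $\gamma$ with the algebraic deck transformation $T_\alpha$ coming from the basepoint loop of the self-homotopy $H$; once this identification is in place, the non-reversibility hypothesis closes the proof with no further work.
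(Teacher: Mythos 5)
Your proposal is correct and follows essentially the same route as the paper: both arguments reduce the question to the $w$-value of the deck transformation relating the two lifted covers, identify that transformation with the basepoint trace of the composite self-homotopy, observe that this trace lies in the centralizer of the curve's class, and invoke non-reversibility to force $w=+1$. You simply make explicit the lifting bookkeeping and the centralizer observation that the paper leaves implicit.
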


\begin{proof}
Suppose one homotopy gives a cover $\widetilde\gamma$ and another homotopy
gives another cover $\widehat\gamma$.
Composing the trace of the base point by the first homotopy from $\gamma$ to $\gamma_0$
and the trace of the base point by the second homotopy from $\gamma_0$ to $\gamma$,
we obain a closed curve $\xi$ based at the base point of $\gamma$.
If $w([\xi])=-1$,  then $\xi$ reverses $\gamma$, but this contradicts
the assumption that $\gamma$ is not reversible.  So $w([\xi])=+1$.
This implies that $i_{\widehat\gamma}(\gamma)=i_{\widetilde\gamma}(\gamma)$.
\end{proof}

Now it is easy to show that $I_{\widetilde\gamma_0}(\gamma)$ is a regular homotopy
invariant, and from this the following follows as in the previous cases.

\begin{theorem} \label{thm:non-reversible} Suppose that
$M$ is a complete euclidean/hyperbolic non-orientable surface
and that $\gamma_0:S^1\to M$ is an orientation preserving non-null-homotopic
non-reversible closed curve on $M$.
Fix a cover $\widetilde\gamma_0$ of $\gamma_0$.
If $\gamma:S^1\to M$ is a generic regular closed curve homotopic to $\gamma_0$, 
then we have
\[
W_{\widetilde\gamma_0|[0,1]}(\gamma)=I_{\widetilde\gamma_0}(\gamma) \in \bbZ,
\]
where $\widetilde\gamma_0|[0,1]:[0,1]\to\widetilde M$ denotes 
the restriction of $\widetilde\gamma_0: \bbR\to\widetilde M$
to the interval $[0,1]$.
\end{theorem}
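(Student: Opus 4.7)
The plan is to mirror the argument for Theorem~\ref{thm:orientable}: first show that $I_{\widetilde\gamma_0}(\gamma)$ is a regular homotopy invariant, and then evaluate it on a convenient model regularly homotopic to $\gamma$.

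For invariance, I would take a regular homotopy $\{\gamma_s\}$ which is generic outside finitely many times and lift it to a regular homotopy of covers $\{\widetilde\gamma_s\}$ starting from the cover $\widetilde\gamma$ obtained via the fixed free homotopy from $\gamma_0$. Since $\gamma$ is orientation preserving, $i_{\widetilde\gamma_s}(\gamma_s)$ is well-defined, and by Proposition~\ref{prop:wd} it agrees with $I_{\widetilde\gamma_0}(\gamma_s)$. As in the proof of Theorem~\ref{thm:orientable}, this sum can only change at times of birth or death of double points in $D_\pm(\gamma_s)$; such events lift to type~(2) birth/death pairs on the single cover $\widetilde\gamma_s$, and these carry opposite signs and hence cancel. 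Thus $I_{\widetilde\gamma_0}(\gamma_s)$ is constant along the homotopy.

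For the evaluation, I would pick a canonical representative $\gamma_*$ freely homotopic to $\gamma$: either the shortest closed geodesic in the free homotopy class when $\inf L_\gamma > 0$, or a horocycle around a cusp when $\inf L_\gamma = 0$. In either case a cover $\widetilde\gamma_*$ is an embedded geodesic or horocycle in $\bbE^2$ or $\bbH^2$, hence has no double points and $i_{\widetilde\gamma_*}(\gamma_*) = 0$. Setting $k = W_{\widetilde\gamma_0|[0,1]}(\gamma)$, I would perturb $\gamma_*$ generically and then add $k$ kinks of the appropriate sign to produce a generic regular closed curve $\gamma_2$ with winding number $k$; by the classification in \cite{MY}, $\gamma_2$ is regularly homotopic to $\gamma$. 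Carrying out the same steps $H_{\widetilde\gamma_*}$-equivariantly on the cover yields $\widetilde\gamma_2$ whose only double points are the lifted kinks, each contributing $\pm 1$, so $i_{\widetilde\gamma_2}(\gamma_2) = k$. Combined with invariance this gives $I_{\widetilde\gamma_0}(\gamma) = I_{\widetilde\gamma_0}(\gamma_2) = k = W_{\widetilde\gamma_0|[0,1]}(\gamma)$.

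The main point to watch is that the reference lift $\widetilde\gamma_0$ affects both invariants in exactly the same way. Non-reversibility of $\gamma_0$ is what guarantees that any free homotopy from $\gamma_0$ to $\gamma$ determines the cover of $\gamma$ up to an orientation-preserving deck transformation, so the sign of $i_{\widetilde\gamma}(\gamma)$ is unambiguous; Proposition~\ref{prop:wd} records this on the $I$-side, and the parallel statement on the $W$-side is built into the definition of $W_{\widetilde\gamma_0|[0,1]}$ in \cite{MY}. Once this bookkeeping is in place, the rest of the proof is a routine transcription of the orientable argument.
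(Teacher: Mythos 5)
Your proposal is correct and follows essentially the same route the paper intends: the paper itself only remarks that regular homotopy invariance of $I_{\widetilde\gamma_0}(\gamma)$ is ``easy to show'' and that the theorem then ``follows as in the previous cases,'' i.e.\ by the geodesic/horocycle-plus-kinks evaluation of Theorem~\ref{thm:orientable}, which is exactly what you carry out. Your closing observation that non-reversibility (via Proposition~\ref{prop:wd}) pins down the sign on both the $I$-side and the $W$-side is precisely the point the paper relies on.
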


\begin{corollary}  \label{cor:non-reversible} 
Let $M$, $\gamma_0$, and $\widetilde\gamma_0$ be as in the theorem above
and assume that $\gamma$ and  $\gamma'$ are
generic regular closed curves on $M$ which are homotopic to $\gamma_0$.
Then 
$\gamma$ and $\gamma'$ are regularly homotopic if and only if 
$I_{\widetilde\gamma_0}(\gamma)=I_{\widetilde\gamma_0}(\gamma')$
\end{corollary}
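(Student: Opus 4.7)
The plan is to reduce this corollary directly to Theorem~\ref{thm:non-reversible} together with the main regular homotopy classification theorem of \cite{MY}. First I would record that since both $\gamma$ and $\gamma'$ are assumed homotopic to $\gamma_0$, they are in particular freely homotopic to each other; so the ``free homotopy'' hypothesis that appeared explicitly in Corollaries~\ref{cor:orientable}, \ref{cor:ori-rev} and \ref{cor:reversible} is subsumed in the present hypothesis and does not need to be added to the statement.

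For the forward direction, if $\gamma$ and $\gamma'$ are regularly homotopic, then $I_{\widetilde\gamma_0}(-)$ is a regular homotopy invariant (as noted in the discussion preceding Theorem~\ref{thm:non-reversible}), so $I_{\widetilde\gamma_0}(\gamma)=I_{\widetilde\gamma_0}(\gamma')$.

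For the reverse direction, I would apply Theorem~\ref{thm:non-reversible} to each of $\gamma$ and $\gamma'$ (this is permitted because both are homotopic to $\gamma_0$), obtaining
\[
W_{\widetilde\gamma_0|[0,1]}(\gamma)=I_{\widetilde\gamma_0}(\gamma)=I_{\widetilde\gamma_0}(\gamma')=W_{\widetilde\gamma_0|[0,1]}(\gamma').
\]
Combined with the fact that $\gamma$ and $\gamma'$ are freely homotopic, the main classification result from \cite{MY} (two homotopic regular closed curves are regularly homotopic iff their winding numbers agree, where in the present non-reversible case the winding number takes values in $\bbZ$ once the reference lift $\widetilde\gamma_0$ is fixed) immediately yields that $\gamma$ and $\gamma'$ are regularly homotopic.

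Since all the substantive work has been done in Theorem~\ref{thm:non-reversible} and Proposition~\ref{prop:wd} (which guarantees that $I_{\widetilde\gamma_0}$ is well-defined independently of the chosen free homotopy to $\gamma_0$), no new obstacle arises here; the only point that requires a moment of care is making sure the $W_{\widetilde\gamma_0|[0,1]}$ invariant used in Theorem~\ref{thm:non-reversible} is indeed the $\bbZ$-valued winding number whose completeness as a regular homotopy invariant was established in \cite{MY} for non-reversible orientation-preserving curves on non-orientable surfaces, so that the ``iff'' has teeth.
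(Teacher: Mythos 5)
Your proposal is correct and matches the paper's (implicit) argument: the corollary is meant to follow by combining Theorem~\ref{thm:non-reversible} with the regular homotopy classification of \cite{MY}, using the regular homotopy invariance of $I_{\widetilde\gamma_0}$ for the forward direction. No gaps.
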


\bigbreak

\end{document}